\newtheoremstyle{def}
     {10pt}
     {10pt}
     {}
     {}
     {\rmfamily\bfseries\upshape}
     {.}
     {.5em}
     {}
 \theoremstyle{def}
 \newtheorem{definition}{Definition}[section]
\newtheorem{remark}[definition]{Remark}
\newtheorem{example}[definition]{Example}
\newtheoremstyle{theorem}
     {20pt}
     {10pt}
     {\it}
     {}
     {\rmfamily\bfseries\upshape}
     {.}
     {.5em}
     {}
\theoremstyle{theorem}
\newtheorem{theorem}[definition]{Theorem}
\newcommand{\smalltwobytwo}[4]{
\left( \begin{smallmatrix} 
  #1 & #2\\
  #3 & #4 
\end{smallmatrix}\right)}
\newcommand{\tred}[1]{\textcolor{red}{#1}}
\tikzset{>=latex}
\DeclareMathOperator{\SL}{\rm{SL}}
\newcommand{\ZZ}{\mathbb{Z}}
\newcommand{\C}{\mathbb{C}}
\newcommand{\Q}{\mathbb{Q}}
\newcommand{\Z}{\mathbb{Z}}
\newcommand{\GL}{\operatorname{GL}}
\newcommand{\F}{\mathbb{F}}
\newcommand{\Ind}{\operatorname{Ind}}
\newcommand{\sym}{\operatorname{Sym}}
\newcommand{\Gal}{\operatorname{Gal}}
\newcommand{\Ad}{\operatorname{Ad}}
\begin{document}

\title{Minimal Integral Models for principal series Weil characters }

\author{Luca Candelori, Yatin Patel}
\email{candelori@wayne.edu}
\email{yatin@wayne.edu}

\address{Department of Mathematics, Wayne State University, 656 W. Kirby,  Detroit, MI 48202}

\begin{abstract}
We prove a conjecture of Udo Riese about the minimal ring of definition for  principal series Weil characters of $\SL_2(\mathbb{F}_p)$, for $p$ an odd prime.  More precisely, we show that the $(p+1)/2$-dimensional Weil characters can be realized over the ring of integers of $\Q(\sqrt{\varepsilon p})$, where $\varepsilon =(-1)^{(p-1)/2}$, and we provide explicit integral models over these quadratic rings. We do so by studying the Galois action on the integral models of  Weil characters recently discovered by Yilong Wang. 
\end{abstract}

\maketitle

\section{Introduction}

Let $\rho: G \rightarrow \GL(V)$ be an irreducible complex representation of a finite group $G$ of exponent $e$. A famous theorem of Brauer states that there is a choice of basis for $V$ (i.e. a {\em model} for $\rho$) so that $\rho(g)$ is a matrix with entries in the cyclotomic field $\Q(\zeta_e)$, for all $g\in G$. Since the entries of the character table for $G$ are always algebraic integers, it is natural to formulate a much stronger conjecture stating that, in fact, the basis can be chosen so that the matrix entries lie in the ring of integers $\Z[\zeta_e] \subseteq \Q(\zeta_e)$ \tred{\cite{CRW},\cite{KS}}. In this case, we say that $\rho$ has an {\em integral} model over the ring $R = \Z[\zeta_e]$. Proving the existence of such integral models is a notoriously difficult problem in integral representation theory. Even when existence can be ascertained, the arguments often cannot be directly adapted to explicitly construct such integral models.

In the case $G = \SL_2(\mathbb{F}_p)$, for $p\geq 3$ a prime, Udo Riese \cite{Riese} proved the existence of integral models over $\Z[\zeta_e]$. The question remains however whether the result is best possible, or whether for this particular family of groups there are integral models defined over proper subrings $R\subset  \Z[\zeta_e]$. For each irreducible character $\chi$, the entries of the character table provide a minimal ring of definition $R_{min}(\chi)$, but it is not clear a priori whether such a {\em minimal} integral model over $R_{min}(\chi)$ actually exists. For example, the character of the Steinberg representation $\chi = St$ (the unique irreducible character of dimension $p$) has entries in $\Z$, thus  $R_{min}(St) = \Z$. It can be shown that an integral model for $St$ exists over $\Z$ \cite[Prop. 1]{Riese}, \cite{Humphreys}, thus providing a minimal integral model for $St$. Similarly, minimal integral models can easily be found for the characters belonging to the irreducible principal series of $G$.

The purpose of this article is to explicitly provide minimal integral models for the Weil characters arising from the reducible principal series of 
$\SL_2(\mathbb{F}_p)$. For each $p\geq 3$, there are precisely two non-isomorphic such irreducible characters  $\xi_1$ and $\xi_2$, each of dimension $(p+1)/2$. The entries in the character table give 
$$
R_{min}(\xi_i) =  \ZZ\left[\frac{1 + \sqrt{\varepsilon p}}{2}  \right], \quad \varepsilon = \left\{ \begin{array}{cl}
1 &\text{ if } p \equiv 1 \mod 4 \\
-1 &\text{ if } p \equiv 3 \mod 4 
\end{array} \right.
$$
for both $i=1,2$. We therefore seek explicit integral models defined over this quadratic ring $ R= R_{min}(\xi_i)$, which in all cases coincides with the ring of integers of $\Q(\sqrt{\varepsilon p})$. In \cite{Riese}, it is conjectured that such minimal integral models should always exist, and existence is proved under the restriction $p\equiv 5 \mod 8$ \cite[Prop. 4]{Riese}. The methods of \cite{Riese} are based on class-field theory and do not provide {\em explicit} integral models even under the more restrictive assumptions. In this article (Theorems \ref{thm1} and \ref{thm3})  we prove the existence of minimal integral models for any prime $p$, with no restrictions, and we provide them explicitly. For example, for the prime $p=7$ and $\xi = \xi_1$ we obtain the following model 

\begin{equation*}
\xi_1(\mathfrak{s}) =  \begin{bmatrix}
-1 & \frac{1}{2}(1-\sqrt{-7}) & 0 & 0 \\
-\frac{1}{2}(1+\sqrt{-7}) & 1 & 0 & 0 \\
\frac{1}{2}(1-\sqrt{-7}) & \frac{1}{2}(1+\sqrt{-7}) & 0 & -1 \\
1 & -1 & 1 & 0
\end{bmatrix}
\end{equation*}
and
\begin{equation*}
\xi_1(\mathfrak{t})=  \begin{bmatrix}
0 & 0 & 0 & -1 \\
1 & 0 & 0 & \frac{1}{2}(1-\sqrt{-7}) \\
0 & 1 & 0 & 1 \\
0 & 0 & 1 & \frac{1}{2}(1+\sqrt{-7})
\end{bmatrix},
\end{equation*}
where $\mathfrak{s} = \smalltwobytwo{0}{-1}{1}{0}$ and $\mathfrak{t}  = \smalltwobytwo{1}{1}{0}{1}$ are the standard choices of generators for $\SL_2(\F_p)$. Our methods are based on recent work of Yilong Wang \cite{Yilong}, who provided explicit integral models for $\xi_i$ over $\ZZ[\zeta_p]$. By extensive explicit calculations, the authors observed that in fact Wang's models are defined over the minimal rings $R_{min}(\xi_i)$. This is what we prove in this article, by studying the action of the Galois group $\Gal(\Q(\zeta_p)/\Q)$ on Wang's integral models. The key observation is the somewhat surprising fact that the action of the Galois subgroup corresponding to the unique quadratic subfield of $\Q(\zeta_p)$ on the standard (non-integral) models for the Weil characters is given by conjugation by a permutation matrix, and that the same permutation matrix gives the action of the Galois subgroup on the Vandermonde matrix associated with the Weil character. The referee pointed out to the authors that this compatibility can be also be obtained as a special case of the Galois symmetry of the modular representation of a {\em modular tensor category} \cite{Dong-Lin-Ng}, \cite{reconstruction}. Indeed it would be interesting to extend the methods of this article to provide explicit minimal integral models for the modular representation of a modular tensor category. 

There are also two irreducible {\em cuspidal} Weil characters in the character table of $\SL_2(\mathbb{F}_p)$, each of dimension $(p-1)/2$. Explicit integral models for these have been provided over $\ZZ[\zeta_p]$ by P.M. Gilmer, G. Mausbaum, and P. van Wamelen \cite{PatrickM-2004} and more recently by Shaul Zemel \cite{Shaul}, who was also motivated by Wang's work. However, minimal integral models are not known for such cuspidal Weil characters, and the methods of this article do not directly extend to the cuspidal case. Riese proves existence whenever $p\equiv 3 \mod 4$ \cite[Prop. 3]{Riese}, and provides evidence for non-existence in the case $p\equiv 1 \mod 4$, but no explicit models are known in either case. This is certainly a subject for further investigation. In addition, it would be interesting to extend these results to the case $q = p^r$ a prime power, which was the original scope of Riese's conjectures.  

We now summarize the contents of this article. In Section \ref{section:WeilRep}, we recall the definition of the Weil representation, which provides explicit models for representations of $\SL_2(\F_p)$ defined over the ring $\Z[1/p,\zeta_p]$. Since conventions about the Weil representation vary between different sources, we collect all the necessary formulas in the section. In Section \ref{section:IntegralModels}, we explain how to obtain explicit models for the Weil characters of $\SL_2(\F_p)$ from the Weil representation, and we recall Wang's construction of integral models for $\xi_1$ and $\xi_2$ \cite{Yilong}. In Section \ref{Section:proofs} we prove our main results Thms. \ref{thm1}, \ref{thm2} and \ref{thm3}, which show that Wang's integral models are in fact {\em minimal}. Finally, in Section \ref{sec:examples} we provide explicit examples of the minimal integral models for $p=7$ and $p=13$ and for both $\xi_1$ and $\xi_2$.  

We would like to thank Siu-Hung Ng, Yilong Wang and Shaul Zemel for sharing many great ideas and conversations about this problem, as well as the referee for many insightful comments. The first author is supported by the U.S. Department of Energy, Office of Science, Basic Energy Sciences, under Award Number DE-SC-SC0022134. The second author is supported by a 2021-2022 Thomas C. Rumble University Graduate Fellowship through the Department of Mathematics at Wayne State University.

\section{The Weil representation}
\label{section:WeilRep}

In this section we recall the construction of the Weil representation of $\SL_2(\F_p)$, for $p>2$ a prime, first introduced in \cite{Weil}. This construction will be used to provide explicit models for the Weil characters $\xi_1$ and $\xi_2$ over the ring $\Z[1/p,\zeta_p]$, and it serves as the point of departure for  providing integral models over $\ZZ[\zeta_p]$ \cite{Yilong}. There are many formulas and conventions about the Weil representation in the literature, and some references are hard to locate or translate. We thus find it necessary to state explicitly all the facts, conventions, and formulas that we will be using in this article, while providing references for the proofs.  

Let $p>2$ be a prime, let $C_p = (\Z/p\Z, +)$ be the cyclic group of order $p$ and let $K_p:= C_p \times C_p$.  Let $\nu_p = \{ \zeta \in \C : \zeta^p = 1 \}$  be the group of $p$-th roots of unity in $\C$ and let $\zeta_p := e^{2\pi i/p}$ be the principal $p$-th root of unity. Let $Q(x):C_p \rightarrow \Q/\Z$ be a quadratic form on $C_p$ and let $B(x,y) = Q(x+y) - Q(x) - Q(y)$ be its associated bilinear form.

\begin{definition}
The \em{Heisenberg group} $H_Q$ is the set of elements $(\lambda,x,y) \in \nu_p\times K_p$, together with multiplication given by 
$$
(\lambda_1, x_1, y_1)(\lambda_2, x_2, y_2) =  (\lambda_1 \lambda_2 \,e^{2\pi i B(x_1,y_2)}, x_1+x_2, y_1+y_2).
$$
\end{definition}

Note that $\nu_p = Z(H_p)$ and $H_Q$ is a (non-trivial) central extension: $$ 1 \to \nu_p \to H_Q \to K_p \to 0.$$

The generators of $H_Q$ are $(\zeta_p, 0, 0)$, $(1,1,0)$, and $(1,0,1)$. The subgroup of $K_p$ that is generated by $(x,0)$ where $x\in\F_p$ is the called the {\em modulation subgroup}. Let $A = \{ (\lambda, x, 0), \lambda \in \nu_p,x \in C_p \}.$ Then $A$ is an index $p$  subgroup of $H_Q$, projecting onto the modulation subgroup.  Since any isomorphism $C_p \cong \nu_p$ corresponds to a choice of a primitive $p$-th root of unity, we choose the one that sends  $1 \mapsto \zeta_p$.  With this choice, given characters $\phi_1$ and $\phi_2$ of $C_p$, we define a one-dimensional character $\psi:A\rightarrow \C^{\times}$ by  
$$
\psi(\lambda, x, 0) = \phi_1(x)\phi_2(\lambda).
$$ 
We induce the representation of the subgroup $A$ to obtain an irreducible $p$-dimensional representation $\sigma := \Ind_A^{H_p}(\psi)$. The underlying vector space for this representation is 
$$
V = \left \lbrace \text{functions: } C_p \to \C \right \rbrace
$$
which has a canonical basis of delta functions $\{ \delta_0, \cdots, \delta_{p-1} \}$, defined by $\delta_j(k) = \delta_{jk}$ (Kronecker's $\delta$-function). With  respect to this basis, we obtain an explicit model for the irreducible representation   
$$ \sigma: H_Q \to \GL(V) \simeq \GL_p(\C),$$  
such that an element $(\zeta,0,0) \in \nu_p \subset H_Q$ acts by scalar multiplication via $\phi_2(\zeta)$.  In addition, the generator $(1,1,0)$ acts via the diagonal matrix with entries 
\begin{equation}
\sigma(1,1,0)_{jj} =  \phi_1(1)\phi_2(e^{2\pi iB(1,j)}), \nonumber
\end{equation}
while the generator $(1,0,1)$ acts by a permutation matrix:
\begin{equation}
\begin{split}
\sigma(1,0,1)& =  \begin{bmatrix}
0 &0 & 0& \cdots & 0  &1 \\
 1  &0 & 0& \cdots&0 & 0\\
0  &1 & 0& \cdots &0 & 0 \\
\vdots & \vdots & \vdots &\vdots &\vdots  &\vdots \\
0&0& 0 & \cdots & 1 & 0
\end{bmatrix}.
\end{split} \nonumber
\end{equation}
The ``natural character" of $\nu_p$ is defined to be the inclusion of $\nu_p$ into $\C$. In our conventions, this corresponds to $\phi_2$ being the identity on $\nu$: $\phi_2(\zeta_p^j)=\zeta_p^j$. For each prime $p$, there exists a unique (up to isomorphism) irreducible  representation of $H_Q$ such that $\nu_p$ acts by its natural character. This is the Mackey-Stone-von Neumann Theorem \cite{Gar15}: 
\begin{theorem}[Mackey-Stone-von Neumann]
For fixed non-trivial (unitary) central character, up to isomorphism
there is a unique irreducible (unitary) representation of the Heisenberg group with that central character. Further, any (unitary) representation with that central character is a multiple of that irreducible.
\end{theorem}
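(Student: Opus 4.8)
\emph{Proof plan.} The essential observation is that $H_Q$ is a \emph{finite} group: $|H_Q| = |\nu_p|\cdot|K_p| = p\cdot p^{2} = p^{3}$, and it is precisely the Heisenberg (extraspecial-type) $p$-group attached to the symplectic space $(K_p, B)$. Consequently every representation is unitarizable by averaging, so one may argue with ordinary complex representations and classical character theory rather than with operator algebras. Fix a non-trivial central character $\chi\colon \nu_p \to \C^{\times}$. For existence I would simply invoke the construction recalled above: $A = \{(\lambda,x,0)\} \cong \nu_p\times C_p$ is an abelian subgroup of order $p^{2}$, hence a \emph{maximal} abelian subgroup (a strictly larger abelian subgroup would be all of the non-abelian group $H_Q$), and it is normal because it has index $p$ in a $p$-group. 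Extending $\chi$ to a character $\psi$ of $A$ (any of the $p$ extensions will do) and setting $\sigma := \Ind_A^{H_Q}(\psi)$ produces a representation of dimension $[H_Q:A] = p$ on which $\nu_p$ acts by $\chi$.

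First I would establish irreducibility of $\sigma$ using Mackey's irreducibility criterion. Since $A\trianglelefteq H_Q$, for any $g\notin A$ one has $A\cap gAg^{-1} = A$, so the criterion reduces to checking that the conjugated character $\psi^{g}$ is distinct from $\psi$ on $A$ for every $g\notin A$. Using the coset representatives $g = (1,0,y)$, $y\in C_p$, a short computation with the Heisenberg multiplication gives
\[
(1,0,y)\,(\lambda,x,0)\,(1,0,y)^{-1} \;=\; \bigl(\lambda\, e^{-2\pi i B(x,y)},\, x,\, 0\bigr),
\]
so $\psi^{g}$ equals $\psi$ multiplied by the character $x\mapsto e^{-2\pi i B(x,y)}$ of $C_p$. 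Because $H_Q$ is a non-trivial central extension, the bilinear form $B$ is non-degenerate, so this twist is non-trivial exactly for $y\neq 0$, i.e. for all $g\notin A$; hence Mackey's criterion shows $\sigma$ is irreducible of dimension $p$ with central character $\chi$.

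Next I would deduce the ``multiplicity'' statement, which contains uniqueness. Let $\pi$ be any representation of $H_Q$ with central character $\chi$. Restricting $\pi$ to the abelian subgroup $A$, it decomposes into characters, each of which restricts to $\chi$ on $\nu_p$ and therefore --- using $A\cong \nu_p\times C_p$ --- is of the form $\psi$ twisted by a character of $C_p$. By the conjugation computation above, $y\mapsto e^{-2\pi i B(\cdot,y)}$ is a bijection onto $\widehat{C_p}$, so all $p$ of these characters of $A$ are $H_Q$-conjugate to $\psi$ and hence all induce to $\sigma$. Choosing a character $\psi'$ that actually occurs in $\pi|_A$, Frobenius reciprocity gives
\[
0 \neq \Hom_A(\psi',\pi|_A) \;\cong\; \Hom_{H_Q}\!\bigl(\Ind_A^{H_Q}\psi',\,\pi\bigr) \;=\; \Hom_{H_Q}(\sigma,\pi),
\]
so $\sigma$ embeds in $\pi$ and, being irreducible, splits off as a direct summand. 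Induction on $\dim\pi$ then gives $\pi\cong\sigma^{\oplus k}$; in particular an irreducible $\pi$ with central character $\chi$ must be isomorphic to $\sigma$, which is the asserted uniqueness. (As a consistency check, this shows $H_Q$ has exactly $p-1$ irreducibles of dimension $p$, one for each non-trivial central character, and indeed $p^{2}\cdot 1 + (p-1)\cdot p^{2} = p^{3} = |H_Q|$.)

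I do not expect a genuine obstacle: the whole argument is finite and semisimple, so Mackey's criterion and Frobenius reciprocity apply verbatim. The one place demanding care is the conjugation computation together with its consequence --- that one needs $B$ non-degenerate (equivalently, $H_Q$ a non-trivial central extension; equivalently, since $p$ is odd, $Q\not\equiv 0$) --- which is exactly the setting in which the Weil representation is built. If one instead worked with the Heisenberg group over an infinite field, uniqueness would be the hard part, requiring von Neumann's operator-theoretic argument; finiteness is what makes the statement elementary here.
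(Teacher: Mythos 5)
Your argument is correct, and it supplies a complete proof where the paper gives none: the paper simply cites the literature for Mackey--Stone--von Neumann and remarks that, for the finite group $H_Q$, the result ``can be easily proved by computing the character table,'' with the adjacent remark noting that the character table is obtained by inducing characters of $A$. Your route is the rigorous version of exactly that gesture, but organized more efficiently: instead of computing the full character table, you use Mackey's irreducibility criterion (legitimate here since $A$ is normal of index $p$, so $A\cap gAg^{-1}=A$ and one only has to check $\psi^g\neq\psi$ for $g\notin A$) to get irreducibility of $\sigma=\Ind_A^{H_Q}(\psi)$, and then Frobenius reciprocity plus the observation that all characters of $A\cong\nu_p\times C_p$ lying over $\chi$ form a single $H_Q$-conjugacy orbit to get both uniqueness and the multiplicity statement at once. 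What this buys over the character-table computation is that it isolates precisely where the hypotheses enter; your closing counting check ($p^2$ linear characters plus $p-1$ irreducibles of dimension $p$) recovers the character-table statement as a corollary rather than as the engine of the proof. One small point to tighten: the twist you compute is, for a general central character $\chi=\phi_2$, the character $x\mapsto\chi\bigl(e^{-2\pi iB(x,y)}\bigr)$ of $C_p$ rather than $x\mapsto e^{-2\pi iB(x,y)}$ itself (your formula is the special case where $\chi$ is the natural character), and its nontriviality for $y\neq 0$ uses \emph{both} the nondegeneracy of $B$ and the nontriviality of $\chi$ --- which is exactly why the trivial central character is excluded from the statement. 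With that adjustment the argument is airtight, and the same orbit computation is what powers your uniqueness step, so nothing else needs to change.
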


The Theorem is usually stated for more general Heiseberg groups, including infinite ones, arising as central extensions of symplectic vector spaces. Note however  that for the finite Heisenberg group $H_Q$ this result can be easily proved by computing the character table.

\begin{remark}
The character table of $H_Q$ can be computed by inducing characters of $A$ as above. It follows that every irreducible character of $H_Q$ has a minimal integral model defined over $\ZZ[\zeta_p]$, verifying the stronger form of Brauer's theorem stated in the Introduction. Since $H_Q$ is solvable of exponent $p$, this also follows from the more general result of \cite{CRW}. 
\end{remark} 

In order to construct the Weil representation of $\SL_2(\F_p)$, we need to consider automorphisms of $H_Q$, as follows. 

\begin{definition} The {\em Clifford group} $\mathscr{C}$ is the group of automorphisms of the Heisenberg group that fix its center $\nu_p$. An automorphism $\psi \in \mathscr{C}$ is {\em symmetric} if $\psi(\lambda, -x,-y)  = \psi(\lambda,x,y)$. Let $\mathscr{C}^{\sym}$ be the subgroup of $\mathscr{C}$ consisting of all symmetric automorphisms that fix $\nu_p$.
\end{definition}

Let $\psi \in \mathscr{C}^{\sym}(H_Q)$ and let $\sigma^\psi := \sigma \circ \psi$. Since $\psi$ fixes the center, the representation $\sigma^\psi$ is irreducible, and $\nu_p$ acts by its natural character, so that $\sigma \cong \sigma^\psi$ by the Mackey-Stone-von Neumann Theorem.  Therefore there exists a linear transformation $M(\psi) \in \GL(V)$ such that $ M(\psi) \sigma = \sigma^\psi M(\psi)$, that is, making the following diagram    
$$\begin{tikzcd}
H_Q \arrow[r, "\sigma"] \arrow[d,  swap ,"\psi"]
& \GL(V) \arrow[d,  "\Ad(M(\psi))"] \\
H_Q \arrow[r, "\sigma^{\psi}"] & \GL(V)
\end{tikzcd}$$
commutative. The linear map $M(\psi)$ is only defined up to scalars, but it can be shown \cite{Weil} that such scalars can be chosen for each $\psi$ so that the function $\psi \mapsto M(\psi)$ gives a group {\em homomorphism} $\mathscr{C}^{\sym}\rightarrow \GL(V)$. Since $p>2$ is an odd prime, there is an isomorphism $\mathscr{C}^{\sym}(H_Q) \simeq \SL_2(\F_p)$ (see \cite{FLR}) and we obtain the following:
\begin{definition} The \textit{Weil representation} 
$$
\rho_Q: \SL_2(\F_p) \longrightarrow \GL(V),
$$
is the representation of $\SL_2(\F_p)$ obtained by composing the isomorphism $\mathscr{C}^{\sym}(H_Q) \simeq \SL_2(\F_p)$ with the homomorphism $\psi \mapsto M(\psi)$.
\end{definition}
We now write an explicit model for the Weil representation using the canonical basis of delta functions for $V$. The generator $\mathfrak{s} \in \SL_2(\F_p)$ maps isomorphically to the symmetric automorphism $\psi_{\mathfrak{s}} \in \mathscr{C}^{\sym}$ given by
$$
\psi_{\mathfrak{s}}(\lambda, x, y)  = (\lambda e^{-2\pi iB(x,y)}, -y, x) .
$$
The corresponding linear transformation $M(\psi_s)$ is a discrete Fourier transform (DFT) given by 

\begin{equation}
\label{eqn:Weil-S_mat}
 \rho_Q({\mathfrak{s}}) = M(\psi_{\mathfrak{s}}) = \dfrac{1}{\sqrt{\varepsilon p}}\begin{bmatrix}
1 & 1 & 1 & \cdots & 1\\
1 & e^{-2\pi i B(1,1)} & e^{-2\pi i B(1,2)} & \cdots & e^{-2\pi i B(1,p-1)} \\
1 & e^{-2\pi i B(2,1)} & e^{-2\pi i B(2,2)} & \cdots & e^{-2\pi i B(2,p-1)} \\
\vdots & \vdots & \vdots & \cdots & \vdots \\
1 & e^{-2\pi i B(p-1,1)} & e^{-2\pi i B(p-1,2)} & \cdots & e^{-2\pi i B(p-1,p-1)} 
\end{bmatrix}.
\end{equation}

The other generator ${\mathfrak{t}} \in \SL_2(\F_p) $ maps to the symmetric automorphism $\psi_{\mathfrak{t}} \in \mathscr{C}^{\sym}$ given by 
$$\psi_{\mathfrak{t}}(\lambda, x, y) = (\lambda e^{2\pi i Q(y)}, x+y,y)$$
and  
\begin{equation}
\label{eqn:Weil-T_mat}
\rho_Q({\mathfrak{t}}) = M(\psi_{\mathfrak{t}}) = \begin{bmatrix}
1 & 0 & 0 & \cdots & 0 & 0 \\
0 & e^{2\pi i Q(1)} & 0 & \cdots &0 & 0 \\
0 & 0 & e^{2\pi i Q(2)} & \cdots& 0 & 0 \\
\vdots & \vdots & \vdots & \ddots & \vdots & \vdots \\
0 & 0 & 0 & \cdots & 0 & e^{2\pi i Q(p-1)}
\end{bmatrix}.
\end{equation}

It is easily checked that the identities $\rho_Q({\mathfrak{s}})^2 = (\rho_Q({\mathfrak{s}})\rho_Q({\mathfrak{t}}))^3$ and $\rho_Q({\mathfrak{s}})^4 = \mathds{1}$ are satisfied, so that $\rho_Q$ is indeed a representation of $\SL_2(\F_p)$ dimension $p$.

The isomorphism class of each Weil representation only depends on the equivalence class of the quadratic form $Q$. Note that there are two nonequivalent quadratic forms on $C_p$: 

\begin{equation}
\label{eqn:Q1Q2}
Q_1(x)=x^2/p \quad \text{    and    } \quad Q_2(x)=cx^2/p
\end{equation}

\noindent where $c$ is a quadratic non-residue modulo $p$.  Therefore we obtain two non-isomorphic Weil representations 
\begin{equation}
\label{eqn:WeilReps}
\rho_1, \rho_2 : \SL_2(\F_p) \longrightarrow \GL(V), 
\end{equation}
along with their explicit (non-integral) models defined over the ring $\Z[1/p,\zeta_p]$.


\section{Integral models for Weil characters}
\label{section:IntegralModels}

The Weil representations $\rho_1, \rho_2$ defined by \eqref{eqn:WeilReps} decompose into irreducible representations $\rho_i = \xi_i\oplus \pi_i$, $i=1,2$. The characters $\xi_1,\xi_2$ are the two {\em Weil characters} of $\SL_2(\F_p)$ of dimension $(p+1)/2$; they belong to the {\em principal series} of $\SL_2(\F_p)$. The other two Weil characters $\pi_1, \pi_2$ are of dimension $(p-1)/2$ and they belong to the {\em cusipidal series}.  

We now construct an explicit integral model over $\Z[\zeta_p]$ for the principal series Weil characters $\xi_1,\xi_2$, following \cite{Yilong}. Let $V^+ \subseteq V = \left \lbrace \text{functions: } C_p \to \C \right \rbrace$ be the subspace of {\em even} functions, satisfying $f(-x) = f(x)$ for all $x\in C_p$. This subspace is an invariant subspace for both $\rho_1$ and $\rho_2$, and the principal series Weil characters  $\xi_1,\xi_2$ are the restrictions of $\rho_1,\rho_2$ to $V^+$. To write down an explicit model for these characters, consider the basis for $V^+$ given by the even delta functions: 
$$b^+_0:= \delta_0, \ b^+_1:=\delta_1 + \delta_{p-1}, \ \cdots, \ \ b^+_{(p-1)/2} := \delta_{(p-1)/2} + \delta_{(p+1)/2}.$$
Then the explicit model for the Weil representation given by \eqref{eqn:Weil-S_mat} and \eqref{eqn:Weil-T_mat} can be used to give an explicit model for $\xi_1$ and $\xi_2$ over the ring $\Z[1/p,\zeta_p]$. Let  $S$ and $T$ denote the matrices of the Weil representations $\rho(\mathfrak{s})$ and $\rho(\mathfrak{t})$ respectively, restricted to the basis of even functions. Then 
\begin{equation}
\label{eqn:xi-S-matrix}
\begin{split}
S_{jk} &=\left \lbrace
\begin{array}{lr} \dfrac{\sqrt{\varepsilon p}}{\varepsilon p} & 0 \leq j \leq \frac{p-1}{2} \text{ and } k=0\\
\dfrac{2\sqrt{\varepsilon p}}{\varepsilon p} & j=0 \text{ and } 1 \leq k \leq \frac{p-1}{2}\\
\dfrac{1}{\sqrt{\varepsilon p}}\left(e^{-2\pi i B(j,k)} + e^{2\pi i B(j,k)}\right) & \text{otherwise}
\end{array}
\right.
\end{split}
\end{equation}
and 
\begin{equation}
\label{eqn:xi-T-matrix}
T_{jk} = \left \lbrace \begin{array}{rl}
e^{2\pi i Q(j)}& \text{if } j=k\\
0 &  \text{otherwise.}
\end{array} \right. 
\end{equation}

Note that these models are not integral, since they can only be defined over the ring $\Z[1/p,\zeta_p]$. 


\begin{remark}
The $(p-1)/2$-dimensional cuspidal series representations $\pi_1$ and $\pi_2$ can be constructed similarly by restricting $\rho$ to the subspace $V^- \subseteq V$ of {\em odd} functions, those satisfying $f(-x) = -f(x)$. This subspace has basis

$$ b^-_0:=\delta_1 - \delta_{(p-1)},\ \  \cdots, \ \ b^-_{(p-3)/2} := \delta_{(p-1)/2} - \delta_{(p+1)/2}.$$ 

Let $S$ and $T$ denote the matrices of the Weil representations $\rho(\mathfrak{s})$ and $\rho(\mathfrak{t})$ respectively, restricted to the basis of odd delta functions. 
Then $S$ and $T$ are given by 
$$
S_{jk} =\dfrac{1}{\sqrt{\varepsilon p}}\left(e^{-2\pi i B(j+1,k+1)} - e^{2\pi i B(j+1,k+1)}\right),
$$
and 
$$
T_{jk} = \left \lbrace \begin{array}{rcl} 
e^{2\pi i Q(j+1)}& \text{if} & j=k\\
0 & & \text{otherwise.}
\end{array} \right.
$$

These formulas provide explicit models for $\pi_1$ and $\pi_2$ over $\Z[1/p, \zeta_p]$.

\end{remark}

Wang \cite{Yilong} provides an {\em integral} model for $\xi_2$ over the ring $\Z[\zeta_p]$, by using a basis consisting of circulant vectors. His construction can also be employed to provide an integral model for $\xi_1$, also defined over the ring $\Z[\zeta_p]$. Note that the existence of such integral models for $\xi_1, \xi_2$ was proved by Riese in \cite{Riese}, but no explicit integral models were given.

To recall Wang's construction, let $Q = Q_1, Q_2$ be one of the quadratic forms \eqref{eqn:Q1Q2}, corresponding to each Weil character $\xi_1, \xi_2$. Let $r=(p-1)/2$ and let $\theta_j= e^{2\pi i Q(j)}$ for $0 \leq j\leq r$, be the eigenvalues of the $T$-matrix \eqref{eqn:Weil-T_mat}. Note that $\theta_j \neq \theta_k$ for all $j\neq k$. Consequently, the Vandermonde matrix
\begin{equation}
\label{eqn:Vandermonde}
V_Q = \begin{bmatrix}
1&1&1 & 1 & \cdots & 1 \\
1&\theta_1&\theta_1^2 & \theta_1^3 & \cdots & \theta_1^{r} \\
1&\theta_2&\theta_2^2 & \theta_2^3 & \cdots & \theta_2^{r} 
 \\
1&\theta_3&\theta_3^2 & \theta_3^3 & \cdots & \theta_3^{r} 
\\
\vdots&\vdots & \vdots & \vdots  & \cdots & \vdots \\
1&\theta_{r}&\theta_{r}^2 & \theta_{r}^3 & \cdots &\theta_{r}^{r} 
\end{bmatrix}
\end{equation}
is invertible. Wang proves the following: 

\begin{theorem}[\cite{Yilong}, Thm. 1] 
\label{thm:WangsThm}
Suppose $p \geq 5$ is an odd prime. Let $S,T$ be the matrices \eqref{eqn:xi-S-matrix} and \eqref{eqn:xi-T-matrix}. Then the matrices $V_Q^{-1}S V_Q$ and $V_Q^{-1}TV_Q$ have entries in $\Z[\zeta_p]$. 
\end{theorem}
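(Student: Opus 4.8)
The plan is to restate the claim as the stability of a single lattice under $S$ and $T$, and then check both stabilities by hand. Put $r=(p-1)/2$, let $u:=\sum_{j=0}^{r}b_j^{+}$ --- which, unwinding the definition of the even delta functions, is simply the constant function $\mathbf 1$ on $C_p$ --- and set $f_i:=T^{i}u$ for $0\le i\le r$, where $S,T$ denote the operators on $V^{+}$ given by \eqref{eqn:xi-S-matrix} and \eqref{eqn:xi-T-matrix}. Since $T$ is diagonal with eigenvalues $\theta_0=1,\theta_1,\dots,\theta_r$ in the basis $\{b_j^{+}\}$, the coordinate vector of $f_i$ is exactly the $i$-th column of $V_Q$; thus $V_Q=[\,f_0\mid\cdots\mid f_r\,]$, and conjugation by $V_Q$ is the change of basis from $\{b_j^{+}\}$ to $\{f_i\}$. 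Hence $V_Q^{-1}SV_Q$ and $V_Q^{-1}TV_Q$ have entries in $\Z[\zeta_p]$ if and only if the $\Z[\zeta_p]$-lattice $L:=\bigoplus_{i=0}^{r}\Z[\zeta_p]\,f_i$ satisfies $SL\subseteq L$ and $TL\subseteq L$.

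For $T$: the eigenvalues $\theta_1,\dots,\theta_r$ are $e^{2\pi i Q(j)}$, $1\le j\le r$, and since $\{j^{2}\bmod p:1\le j\le r\}$ is precisely the set of nonzero squares, this list equals $\{\zeta_p^{a}\}$ with $a$ running over the quadratic residues when $Q=Q_1$ and the non-residues when $Q=Q_2$. Therefore the characteristic polynomial of $T|_{V^{+}}$ is $(X-1)g(X)$, where $g(X)=\prod_a(X-\zeta_p^{a})$ is the corresponding quadratic Gaussian-period polynomial. Its coefficients are fixed by the subgroup of squares in $\Gal(\Q(\zeta_p)/\Q)$, hence lie in $\Q(\sqrt{\varepsilon p})$, and being algebraic integers they lie in $\mathcal O:=\Z[\tfrac{1+\sqrt{\varepsilon p}}{2}]\subseteq\Z[\zeta_p]$. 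As $T$ is annihilated by this monic degree-$(r+1)$ polynomial over $\mathcal O$, an induction gives $T^{m}u\in L$ for all $m\ge 0$; in particular $TL\subseteq L$ (indeed $V_Q^{-1}TV_Q$ is just the companion matrix of $(X-1)g(X)$).

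For $S$: viewing $T^{k}u$ as the function $m\mapsto e^{2\pi i kQ(m)}$ and applying \eqref{eqn:xi-S-matrix} (equivalently, the discrete Fourier transform \eqref{eqn:Weil-S_mat} restricted to even functions), a completion-of-the-square evaluation of a quadratic Gauss sum gives, for $1\le k\le r$ (so $p\nmid k$),
\[
S\,T^{k}u \;=\; \varepsilon'\,T^{-\bar k}u\qquad(\varepsilon'=\pm 1),
\]
with $\bar k$ the inverse of $k$ modulo $p$; reducing $-\bar k$ to a nonnegative representative, the right-hand side is $\pm T^{m}u\in L$ by the previous paragraph. For $k=0$ the same computation gives $Su=g\,\delta_0=g\,b_0^{+}$ up to sign, where $g$ is the square root of $\varepsilon p$ occurring in \eqref{eqn:Weil-S_mat} (a quadratic Gauss sum up to sign), because $\sum_m \zeta_p^{2jkm}$ vanishes for $j\neq 0$. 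So the theorem reduces to the single assertion $g\,b_0^{+}\in L$.

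To finish, expand $b_0^{+}$ in the basis $\{f_i\}$ by Lagrange interpolation at the nodes $\theta_0,\dots,\theta_r$: the coefficient of $f_i$ is $[X^{i}]\prod_{m=1}^{r}\frac{X-\theta_m}{1-\theta_m}=[X^{i}]\,\frac{g(X)}{g(1)}$, so $g\,b_0^{+}=\frac{g}{g(1)}\sum_{i}\big([X^{i}]g(X)\big)f_i$. Since each $[X^{i}]g(X)\in\mathcal O$, it suffices to show $g/g(1)\in\mathcal O$. Now $g_1(X)g_2(X)=\prod_{a=1}^{p-1}(X-\zeta_p^{a})$ gives $g_1(1)g_2(1)=p$ with $g_1(1),g_2(1)$ Galois-conjugate over $\Q$, so $g(1)$ has norm $\pm p$ in $\Q(\sqrt{\varepsilon p})$; hence the ideal $(g(1))$ is the ramified prime above $p$ in $\mathcal O$, which is also $(\sqrt{\varepsilon p})$. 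Thus $g(1)=\sqrt{\varepsilon p}\cdot v$ with $v\in\mathcal O^{\times}$, and since $g/\sqrt{\varepsilon p}=\pm 1$ we get $g/g(1)=\pm v^{-1}\in\mathcal O^{\times}\subseteq\Z[\zeta_p]$; this proves $g\,b_0^{+}\in L$, hence $SL\subseteq L$, completing the proof. I expect the main obstacle to be exactly the case $k=0$: the identity $S\,T^{k}u=\pm T^{-\bar k}u$ for $k\neq0$ is a routine Gauss-sum manipulation, whereas the integrality of $g\,b_0^{+}$ genuinely uses the arithmetic input that the period value $g(1)$ generates the ramified prime $(\sqrt{\varepsilon p})$ in $\mathcal O$; one must also bookkeep the several sign ambiguities (the choice of $\sqrt{\varepsilon p}$, the Legendre symbols, and the sign of the Gauss sum relative to $\sqrt{\varepsilon p}$), none of which affects membership in $L$.
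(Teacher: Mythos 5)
Your argument is correct, but note that the paper does not prove this statement at all: Theorem \ref{thm:WangsThm} is quoted from Wang \cite{Yilong}, and the authors' own contribution (Theorems \ref{thm1}, \ref{thm2}, \ref{thm3}) is the descent from $\Z[\zeta_p]$ to the quadratic ring. So what you have written is a self-contained replacement for the cited input rather than a variant of an in-paper proof. Your reformulation as stability of the lattice $L=\bigoplus_i \Z[\zeta_p]\,T^i u$, with $u=\mathbf 1$, is exactly right, since the columns of $V_Q$ are the coordinate vectors of $T^i u$ in the basis $\{b^+_j\}$; the $T$-part via the companion matrix of $(X-1)g(X)$, with $g(X)=\prod_{j=1}^r(X-\theta_j)$ having coefficients in $\mathcal O=\Z[\tfrac12(1+\sqrt{\varepsilon p})]$, reproduces the circulant/companion shape of $T'$ displayed in the proof of Theorem \ref{thm1}; the Gauss-sum identity $S\,T^k u=\pm T^{-\bar k}u$ for $p\nmid k$ checks out by completing the square, and your treatment of $k=0$ is the genuinely arithmetic step: $Su=\pm\sqrt{\varepsilon p}\,b^+_0$, Lagrange interpolation gives $b^+_0=g(X)(T)u/g(1)$ in the obvious sense, and since $g_1(1)g_2(1)=p$ with $g_1(1),g_2(1)$ conjugate in $\mathcal O$, the value $g(1)$ generates the ramified prime $(\sqrt{\varepsilon p})$, so $\sqrt{\varepsilon p}/g(1)\in\mathcal O^\times$. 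Two remarks on what your route buys. First, every coefficient you produce ($\pm1$'s, the coefficients of $(X-1)g(X)$, and unit multiples of the coefficients of $g$) lies in $\mathcal O$, not merely in $\Z[\zeta_p]$; so your argument in fact proves the paper's main results, Theorems \ref{thm1} and \ref{thm3}, directly, making the Galois-compatibility step (Theorem \ref{thm2}, the permutation matrix $P$ intertwining $\xi_i$ and $\xi_i^\tau$) unnecessary for minimality — the paper's route instead takes Wang's $\Z[\zeta_p]$-integrality as a black box and descends by the action of $H=\langle\tau\rangle$, which is what isolates the ``surprising compatibility'' the authors emphasize; your route trades that structural observation for an explicit Gauss-sum and ramification computation. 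Second, only cosmetic issues: you use $g$ both for the square root of $\varepsilon p$ and for the polynomial $g(X)$ (keep them apart notationally), and, as you say, the various sign ambiguities (choice of $\sqrt{\varepsilon p}$ in \eqref{eqn:Weil-S_mat}, Legendre symbols, sign of the Gauss sum) only change lattice vectors by units, so they are harmless; also your argument works for $p\ge 3$, not just $p\ge 5$.
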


By Wang's theorem, setting 
$$
\xi_i(\mathfrak{s}) = V_{Q_i}^{-1}S V_{Q_i}, \quad \xi_i(\mathfrak{t}) = V_{Q_i}^{-1}T V_{Q_i}
$$
yields explicit integral models for $\xi_i$, $i=1,2$ over the ring $\ZZ[\zeta_p]$.

\begin{remark} In his proof, Wang shows that the representation $\xi_2$ is defined over $\Z[\zeta_p]$ for $ p \equiv 1 \mod 4$ and $\Z[\zeta_p,i]$ for $p \equiv 3 \mod 4$.  However, upon further inspection of Theorem 1, the proof readily generalizes to show integrality over $\Z[\zeta_p]$ for all odd primes and for both $\xi_1$ and $\xi_2$. 
\end{remark}

\begin{remark}
\label{remark:cuspidalreps}
Integral models for the cuspidal Weil characters (more precisely $\pi_1$) were first provided by Gilbert, Masbaum and van Wamelen \cite{PatrickM-2004} in the context of integral topological quantum field theories. We thank the referee for pointing out the correct reference to this work. More recently, Shaul Zemel has also provided similar integral models for $\pi_1$ and $\pi_2$ \cite{Shaul}. 
\end{remark}

\section{Minimal integral models and proof of the Main Theorem}
\label{Section:proofs}

In this section we prove that the integral models given by Theorem \ref{thm:WangsThm} are in fact defined over the ring of integers of $\mathbb{Q}(\sqrt{\varepsilon p})$, thus providing {\em minimal} integral models for $\xi_1$ and $\xi_2$. We do so by analyzing the action of the Galois group $\Gal(\Q(\zeta_p)/\Q)$ on the integral models. The key observation in this analysis is a surprising compatibility between the Galois actions on the Vandermonde matrix $V_Q$ and on the entries of the matrices of the Weil representations $\rho_Q$, for $Q = Q_1, Q_2$. 

Recall that for an odd prime $p$, the quadratic Gauss sums can be evaluated as follows:
\begin{equation}
\sum_{x \in \Z/p\Z} \zeta_p^{x^2} = \left \lbrace \begin{array}{ll} \sqrt{p} & \text{for } \ p \equiv 1 \mod 4\\
\\
\sqrt{-p} & \text{for } \ p \equiv 3 \mod 4,
\end{array} \right.  \nonumber
\end{equation}
and therefore the quadratic field $\Q(\sqrt{\varepsilon p})$ is always contained in $\Q(\zeta_p)$. By the fundamental theorem of Galois theory, this subfield must correspond to the subgroup $H\subseteq \Gal (\Q(\zeta_p)/\Q)$ of index 2, 
\begin{equation}
\label{eqn:GaloisDiagram}
\begin{tikzcd}[column sep=small]
\Q(\zeta_p) \arrow[d, no head,"r" ] &  & \{ e \} \arrow[d, no head,"r" ] \\
\Q(\sqrt{\varepsilon p}) \arrow[d, no head,"2" ] & \Longleftrightarrow & H = \langle \gamma^2 \rangle  \arrow[d, no head,"2" ] \\
\Q & & \langle \gamma \rangle 
\end{tikzcd}
\end{equation}
where in the diagram we chose a generator $ \langle \gamma \rangle = \Gal (\Q(\zeta_p)/\Q) \simeq \Z/(2r)\Z$ and $r = (p-1)/2$. The generator $\gamma$ can be written down as an automorphism $\gamma: \zeta_p \to \zeta_p^j$ where $\gcd(j,2r)=1$.  Then $\gamma^2: \zeta_p \to \zeta_p^{j^2}$ acts on the exponents of $\zeta_p$ by sending squares to squares and non-squares to non-squares. We can write the sums of square exponents and sums of nonsquares exponents in terms of the quadratic Gauss sum:
\begin{equation}
\sum_{ k \in (\F_p)^\times, \ k \ a \ square } \zeta_p^{k} = \dfrac{-1+\sqrt{\varepsilon p}}{2}  \quad  \text{and} \quad 
\sum_{ j \in (\F_p)^\times, \ j \ not \ a \ square } \zeta_p^{j} = \dfrac{-1-\sqrt{\varepsilon p}}{2} \nonumber
\end{equation}
and it is clear that they are always contained in the ring of integers $\Z[\frac{1}{2}(1+\sqrt{\varepsilon p})] \subset \Q(\sqrt{\varepsilon p})$.  

Let $S$ and $T$ be the matrices given in \eqref{eqn:xi-S-matrix} and \eqref{eqn:xi-T-matrix}, let $V_{Q}$ be the Vandermonde matrix \eqref{eqn:Vandermonde}, and let $T' = V_{Q}^{-1} T V_{Q}$. We first prove the integrality of  $T'$ over the quadratic ring:

\begin{theorem}\label{thm1}  The matrix entries of $T'$ lie in $ \Z[\frac{1}{2}(1+{\sqrt{\varepsilon p}})]$, the ring of integers of $\Q(\sqrt{\varepsilon p})$.
\end{theorem}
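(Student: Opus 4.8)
The plan is to exploit the structure of $T' = V_Q^{-1} T V_Q$ together with the compatibility between the Galois action on $\Q(\zeta_p)$ and the special shape of the matrices involved. First I would recall that $T$ is the diagonal matrix with entries $\theta_j = e^{2\pi i Q(j)} = \zeta_p^{\pm j^2}$ (depending on whether $Q = Q_1$ or $Q_2$), and that $V_Q$ is precisely the Vandermonde matrix in the $\theta_j$'s. The key elementary observation is that conjugating a diagonal matrix $T = \mathrm{diag}(\theta_0, \dots, \theta_r)$ by its own Vandermonde matrix $V_Q$ produces the \emph{companion matrix} of the polynomial $f(X) = \prod_{j=0}^{r}(X - \theta_j)$. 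Indeed, $V_Q$ intertwines the diagonal action of $T$ on eigenvectors with the action of multiplication by $X$ on the basis $1, X, X^2, \dots, X^r$ of $\Q[X]/(f)$, so $T'$ is the companion matrix of $f$. Hence the entries of $T'$ are $0$, $1$, and (in the last column) the coefficients of $f(X)$ up to sign.

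The second step is therefore purely about the coefficients of $f(X) = \prod_{j=0}^{r} (X - \theta_j)$ — these are, up to sign, the elementary symmetric functions $e_k(\theta_0, \dots, \theta_r)$. I must show each $e_k$ lies in $\Z[\tfrac12(1+\sqrt{\varepsilon p})]$. Since the $\theta_j$ are roots of unity, each $e_k$ is already an algebraic integer in $\Z[\zeta_p]$; the content of the claim is that it lies in the quadratic subring, i.e.\ it is fixed by $H = \langle \gamma^2\rangle$. Here is where the Galois analysis set up in the excerpt enters: the set $\{\theta_0, \dots, \theta_r\} = \{\zeta_p^{Q(j) \cdot p} : 0 \le j \le r\}$ — concretely $\{1\} \cup \{\zeta_p^{a} : a \text{ a nonzero square}\}$ when $Q = Q_1$ (and $\{1\} \cup \{\zeta_p^a : a \text{ a nonsquare}\}$ when $Q = Q_2$), because $j \mapsto j^2$ maps $\{1, \dots, r\}$ bijectively onto the nonzero squares mod $p$. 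The automorphism $\gamma^2 : \zeta_p \mapsto \zeta_p^{j^2}$ permutes the nonzero squares among themselves and fixes $1$, hence it permutes the multiset $\{\theta_0, \dots, \theta_r\}$. Any symmetric function of a multiset permuted by $\gamma^2$ is fixed by $\gamma^2$, so every $e_k(\theta_0,\dots,\theta_r)$ is fixed by $H$ and therefore lies in $\Q(\sqrt{\varepsilon p})$; being an algebraic integer, it lies in its ring of integers $\Z[\tfrac12(1+\sqrt{\varepsilon p})]$. The same argument applies verbatim to $Q_2$ since $\gamma^2$ also permutes the nonsquares.

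Assembling: $T'$ is the companion matrix of $f(X) = X^{r+1} - e_1 X^r + e_2 X^{r-1} - \cdots \pm e_{r+1}$, all of whose coefficients I have just shown lie in $\Z[\tfrac12(1+\sqrt{\varepsilon p})]$; its other entries are $0$ and $1$; hence every entry of $T'$ lies in this ring, which is exactly the ring of integers of $\Q(\sqrt{\varepsilon p})$ by the Gauss sum evaluation recalled in the excerpt. The main obstacle — or rather the one point needing genuine care — is verifying cleanly that $V_Q^{-1} T V_Q$ really is the companion matrix (getting the precise conventions for $V_Q$, the ordering of columns $1, \theta, \dots, \theta^r$, and the placement of the coefficients right), and confirming that the image $\{j^2 \bmod p : 1 \le j \le r\}$ is exactly the set of nonzero quadratic residues so that the $\gamma^2$-stability is genuinely there; everything after that is bookkeeping. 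One should also note the degenerate small case $p = 3$ (where $r = 1$) is either excluded as in Wang's theorem or checked by hand.
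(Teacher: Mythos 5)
Your proposal is correct and follows essentially the same route as the paper: identify $T'$ as the companion matrix of $\prod_j (x-\theta_j)$, observe that the eigenvalue set $\{1\}\cup\{\zeta_p^a\}$ (squares or nonsquares) is permuted by the index-two subgroup $H$, and conclude the coefficients are $H$-fixed algebraic integers, hence in $\Z[\tfrac12(1+\sqrt{\varepsilon p})]$. You even supply a justification the paper leaves implicit, namely why $V_Q^{-1}TV_Q$ has companion form.
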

\begin{proof}
Note that $T'$ is the following circulant matrix 
\begin{equation}
\begin{split}
T'&=\begin{bmatrix}
0 & 0 & 0 & \cdots &0  &  -a_{r+1} \\
1 & 0 & 0 & \cdots & 0 & -a_{r} \\
0 & 1 & 0 & \cdots & 0 & -a_{r-1} \\
\vdots & \vdots & \vdots & \vdots & \vdots & \vdots \\
0 & 0 & 0 & \cdots & 1 & -a_1 \\
\end{bmatrix}, \nonumber
\end{split}
\end{equation}
whose characteristic polynomial $m(x) \in \Q(\zeta_p)[x]$ is given by 
\begin{equation}
\begin{split}
m(x) = x^{r+1} + a_1x^{r} + a_2x^{r-1} + \cdots + a_r x + a_{r+1}.
\end{split} \nonumber
\end{equation}
Thus to prove the Theorem it suffices to show that the coefficients of $m(x)$ are contained in $ \Z[\frac{1}{2}(1+{\sqrt{\varepsilon p}})]$. Since $T$ and $T'$ are conjugate matrices, we know that $m(x)$ splits as $(x-1)(x-\theta_1)(x-\theta_2)\cdots (x-\theta_{r})$ in $\Q(\zeta_p)$, where the $\theta_i$'s are the eigenvalues/diagonal entries of $T$. In the case $Q = Q_1$, each $\theta_j$ is of the form $\zeta_p^s$, where $s$ is a square mod $p$, while in the case $Q = Q_2$, each $\theta_i$ is of the form $\zeta_p^n$, where $n$ is not a square mod $p$. In each case, the set of roots of the polynomial $m(x)$ is permuted by the index 2 subgroup $H\subseteq \Gal (\Q(\zeta_p)/\Q)$ defined in \eqref{eqn:GaloisDiagram}. So $H$ fixes the coefficients of $m(x)$, since those are expressible in terms of elementary symmetric polynomials in the $\theta_j$'s.  Therefore the coefficients $a_i$ of $m(x)$ lie in the quadratic extension $\Q(\sqrt{\varepsilon p})$. In addition, since all the roots of unity $\zeta_p^j$ belong to the ring of integers $\Z[\zeta_p]$ and since the symmetric polynomials have integral coefficients, it must follow that the coefficients $a_j$ actually belong to the ring of integers of $\Q(\sqrt{\varepsilon p})$.  
\end{proof}

For any element $\alpha\in \Gal(\Q(\zeta_p)/\Q)$ and any matrix $M = (m_{jk})$ with entries  $m_{jk}\in \Q(\zeta_p)$, we define $\alpha(M) = ( \alpha (m_{jk}) )$ to be the matrix obtained from $M$ by applying the field automorphism $\alpha$ to each entry. More generally, if $\rho:G \rightarrow \GL_n(\Q(\zeta_p))$ is a model for a group representation, we denote by $\rho^{\alpha}$ the model for the group representation $\rho^{\alpha}(g) = \alpha(\rho(g))$, obtained by applying the field automorphism $\alpha$ to each matrix entry $\rho(g)$, $g\in G$. 

In particular, let $\tau=\gamma^2$ be the generator of the subgroup $H$ of the Galois group of $\Q(\zeta_p)$ defined in \eqref{eqn:GaloisDiagram}. Clearly we have $|\tau|= |H| = r$.  Consider the action of $\tau$ on the Vandermonde matrix $V_Q$. Since $\tau(\zeta_p^x) = \zeta_p^y$ where $x,y$ are either both squares or both non-squares mod $p$, and since $\tau$ fixes the number 1, it follows that $\tau(V_Q)$ is a matrix obtained by permuting the rows of $V_Q$. Let ${P}$ denote the permutation matrix corresponding to this permutation of the rows of $V_Q$: 
\begin{equation}
\label{eqn:P-perm}
\tau(V_{Q}) = {P} \cdot V_{Q}.
\end{equation}
Since $|\tau|=r$, the order of $P$ is also $r$. $P$ fixes the first row  and therefore $P^{-1}$ also fixes the first row. We can consider $P$ as a $r-1$ cycle.  We also note that $P^{-1}=P^t$, since the inverse of a permutation matrix is its transpose.

Interestingly, it turns out that the permutation matrix $P$ also gives the Galois action of $\tau$ on the Weil representation models for $\xi_1, \xi_2$ given by the matrices $S,T$ defined in  \eqref{eqn:xi-S-matrix} and  \eqref{eqn:xi-T-matrix}. This non-trivial compatibility between the Galois action on the Vandermonde matrix $V_Q$ and the Weil representation models is the key observation of this article:

\begin{theorem}
\label{thm2} 
Let $\xi_1, \xi_2: \SL_2(\F_p) \rightarrow \GL_{r+1}(\Q(\zeta_p))$ be the explicit models for the principal series Weil characters determined by $\xi_i(\mathfrak{s}) = S, \xi_i(\mathfrak{t}) = T$, where $S,T$ are given in \eqref{eqn:xi-S-matrix} and \eqref{eqn:xi-T-matrix}. Then for each $i = 1,2$ and all primes $p>2$, the permutation matrix $P$ given in \eqref{eqn:P-perm} satisfies 
$$
\xi_i^\tau(g) = P \,\xi_i(g) \, P^{-1}
$$
for all $g \in \SL_2(\F_p)$.
\end{theorem}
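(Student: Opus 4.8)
The plan is to reduce the statement to the two generators $\mathfrak{s},\mathfrak{t}$ of $\SL_2(\F_p)$. Both $g\mapsto \xi_i^\tau(g)=\tau(\xi_i(g))$ and $g\mapsto P\,\xi_i(g)\,P^{-1}$ are group homomorphisms: the first because $\tau$ is a ring automorphism of $\Q(\zeta_p)$ and therefore commutes with matrix multiplication, the second because it is conjugation. Hence the two agree on all of $\SL_2(\F_p)$ as soon as they agree on $\mathfrak{s}$ and $\mathfrak{t}$, i.e. as soon as $\tau(T)=PTP^{-1}$ and $\tau(S)=PSP^{-1}$.

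First I would identify precisely the permutation $\pi$ of $\{0,1,\dots,r\}$ carried by $P$. Since $\tau=\gamma^2$ lies in $H=\Gal(\Q(\zeta_p)/\Q(\sqrt{\varepsilon p}))$, it fixes $\sqrt{\varepsilon p}$ (hence $1/\sqrt{\varepsilon p}$) and acts on roots of unity by $\tau(\zeta_p)=\zeta_p^{a}$ for some quadratic residue $a\bmod p$; write $a=b^2$. The $j$-th row of $V_Q$ is $(1,\theta_j,\theta_j^2,\dots,\theta_j^r)$ with $\theta_j=e^{2\pi i Q(j)}$, so comparing $\tau(V_Q)$ with $V_Q$ row by row forces $\tau(\theta_j)=\theta_{\pi(j)}$. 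For $Q=Q_1$ this says $\zeta_p^{aj^2}=\zeta_p^{\pi(j)^2}$, i.e. $\pi(j)^2\equiv(bj)^2\bmod p$, so that $\pi(j)$ is the representative of $\{\,bj,-bj\,\}$ in $\{0,\dots,r\}$; for $Q=Q_2$ the factor $c$ cancels and one gets the same condition, so a single $\pi$ serves both Weil characters, with $\pi(0)=0$ and $\pi$ a bijection. Recording $P_{j,\pi(j)}=1$ and $P^{-1}=P^{t}$ gives $(PMP^{-1})_{jk}=M_{\pi(j),\pi(k)}$ for any matrix $M$. The identity $\tau(T)=PTP^{-1}$ is then immediate: $T$ is diagonal with $T_{jj}=\theta_j$, so $\tau(T)_{jj}=\tau(\theta_j)=\theta_{\pi(j)}=T_{\pi(j),\pi(j)}=(PTP^{-1})_{jj}$, while both sides have zero off-diagonal entries.

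For $\tau(S)=PSP^{-1}$ I would check $\tau(S_{jk})=S_{\pi(j),\pi(k)}$ entry by entry. If $j=0$ or $k=0$ the entry of $S$ is $1/\sqrt{\varepsilon p}$ or $2/\sqrt{\varepsilon p}$, fixed by $\tau$; since $\pi(0)=0$ and $\pi$ is a bijection, $\pi$ maps the zeroth-row/column positions to themselves and all other positions among themselves, so $S_{\pi(j),\pi(k)}$ has exactly the same value. For $1\le j,k\le r$ one computes $B(j,k)=2tjk/p$ with $t=1$ for $Q_1$ and $t=c$ for $Q_2$, whence $S_{jk}=\tfrac{1}{\sqrt{\varepsilon p}}\bigl(\zeta_p^{2tjk}+\zeta_p^{-2tjk}\bigr)$. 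Applying $\tau$ multiplies the exponent by $a=b^2$, while $\pi(j)\pi(k)\equiv(\pm bj)(\pm bk)\equiv\pm b^2jk\bmod p$; hence the unordered pairs $\{2ta\,jk,\,-2ta\,jk\}$ and $\{2t\pi(j)\pi(k),\,-2t\pi(j)\pi(k)\}$ coincide modulo $p$. Since $S_{jk}$ depends only on this unordered pair of exponents and $\tau$ fixes $1/\sqrt{\varepsilon p}$, we conclude $\tau(S_{jk})=S_{\pi(j),\pi(k)}$, completing the verification on $\mathfrak{s}$.

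The homomorphism reduction and the $T$-identity are routine; the real content — and the step I expect to require the most care — is the clean extraction of $\pi$ from the definition of $P$ together with the bookkeeping of the folding $\{0,\dots,r\}\leftrightarrow\F_p/\{\pm1\}$. The crux is that the \emph{same} permutation $\pi$ governs both $V_Q$ and the bilinear form $B$: this works precisely because $B(j,k)$ is a scalar multiple of the product $jk$ and $\pi$ is ``multiplication by $b$ up to sign'', so that the scalar $a=b^2$ by which $\tau$ dilates the exponents of $\zeta_p$ is exactly reproduced by the permutation of indices, with the residual sign ambiguity washed out by the $\zeta_p^{B}+\zeta_p^{-B}$ symmetry of the even model.
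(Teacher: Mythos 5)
Your proof is correct, but it takes a genuinely different route from the paper's. You make the permutation underlying $P$ explicit — since $\tau(\zeta_p)=\zeta_p^{b^2}$, comparing rows of $\tau(V_Q)$ and $V_Q$ gives $\tau(\theta_j)=\theta_{\pi(j)}$ with $\pi(j)\equiv\pm bj \pmod p$ folded into $\{0,\dots,r\}$, the same $\pi$ for $Q_1$ and $Q_2$ — and then you verify $\tau(T)=PTP^{-1}$ and $\tau(S)=PSP^{-1}$ entry by entry, using that $B(j,k)$ is a scalar multiple of $jk$, that the residual sign ambiguity is absorbed by the symmetry $\zeta_p^{B}+\zeta_p^{-B}$ of the even model, and that $\tau$ fixes $1/\sqrt{\varepsilon p}$; the homomorphism property then propagates the identity from the generators $\mathfrak{s},\mathfrak{t}$ to all of $\SL_2(\F_p)$. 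The paper argues indirectly: the character of $\xi_i$ takes values in $\Q(\sqrt{\varepsilon p})$, so $\xi_i^\tau\cong\xi_i$ and Schur's lemma supplies an intertwiner $M$ unique up to scalar; the relation $\tau(T)=PTP^{-1}$ is deduced from Theorem \ref{thm1} (the $\tau$-invariance of $T'$), the distinctness of the eigenvalues of $T$ forces $M=P^{-1}D$ with $D$ diagonal, and the fixed first row and column of $S$ force $D$ to be scalar. Your computation is more elementary and self-contained — it needs neither Theorem \ref{thm1} nor any character-theoretic input, and it exhibits concretely the compatibility the authors single out as the key observation (dilation of exponents by $b^2$ is reproduced by the index permutation precisely because $B(j,k)$ is proportional to $jk$) — at the cost of the explicit bookkeeping of the folding $\{0,\dots,r\}\leftrightarrow \F_p/\{\pm 1\}$; the paper's argument avoids any entrywise analysis of the $S$-matrix but consumes the prior integrality result for $T'$ as input.
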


\begin{proof}
As is well-known, the character table entries for $\xi_i$ are defined over the quadratic field $\Q(\sqrt{\varepsilon p})$. Since this the fixed field of the subgroup $H = \langle \tau \rangle \subseteq \Gal(\Q(\zeta_p)/\Q)$, it follows that the character table entries of $\xi_i$ and $\xi_i^\tau$ are the same, so $\xi_{i}^\tau \cong \xi_{i}$. This implies that there exists a matrix $M\in \GL(V)$ (uniquely defined up to multiplication by a scalar) such that 
\begin{equation}
\label{eqn:M-iso}
\xi_{i}^\tau(g) = M^{-1} \cdot \xi_{i}(g) \cdot M 
\end{equation}
for all $g \in \SL_2(\F_p)$. We want to show that $M = \lambda P^{-1}$ where $\lambda \in \C$ and $P$ is the permutation matrix \eqref{eqn:P-perm}, determined by the relation $\tau(V_{Q}) = PV_Q$. First, recall that the matrix $T' = V_{Q}^{-1}T V_{Q}$ has entries in $\Q(\sqrt{\varepsilon p})$, by Theorem \ref{thm1}, and therefore $\tau(T') = T'$. It follows that 
$$
\tau(T)= \tau(V_{Q}) \cdot T' \cdot \tau(V_{Q})^{-1} = PV_{Q} \cdot T' \cdot V_{Q}^{-1} P^{-1} = PTP^{-1}.
$$
On the other hand, we know from \eqref{eqn:M-iso} that 
$$
\tau(T) = M^{-1}TM
$$
so that the matrix $T$ must commute with the matrix $P M$.  Since $T$ is a diagonal matrix whose entries $\theta_i$ are all {\em distinct}, it follows that $PM = D$ is also a diagonal matrix, and $M = P^{-1}D$. We want to show that $D = \lambda \cdot \mathds{1}$ in fact scalar. To show this, we also need to employ the $S$-matrix. Let 
\begin{equation}
D = \begin{bmatrix}
\lambda_1 & 0 & 0 &\cdots&  0\\
0& \lambda_2 & 0 &\cdots & 0\\
\vdots & \vdots & \vdots & \cdots & \vdots\\
0& 0 & 0 & \cdots & \lambda_{r} 
\end{bmatrix}. \nonumber
\end{equation}
Again recall from \eqref{eqn:M-iso} that $\tau(S) = M^{-1} \cdot S \cdot M$, so that 
$$
\tau(S) = M \cdot S \cdot  M^{-1} = P^{-1}D \cdot S \cdot D^{-1}P  
$$
Recall that $S$ has the form 
\begin{equation}
 S = \dfrac{\sqrt{\varepsilon p}}{\varepsilon p}\begin{bmatrix}
1 & 2 & 2 & \cdots & 2\\
1 & s_{11} & s_{12} & \cdots & s_{1r} \\
\vdots & \vdots & \vdots & \cdots & \vdots \\
1 & s_{r1} & s_{r2}& \cdots & s_{rr} \\
\end{bmatrix}, \nonumber
\end{equation}
for some entries $s_{jk}$. In particular, since the first row and first column of $S$ consists of elements of $\Q(\sqrt{\varepsilon p})$, the action of $\tau$ leaves them fixed. In addition, by definition the permutation matrix $P$ also fixes the first row and the first column, and so does $P^{-1} = P^t$.  Therefore we can write 
\begin{equation}
P\cdot \tau(S)\cdot P^{-1} =\dfrac{\sqrt{\varepsilon p}}{\varepsilon p} \begin{bmatrix}
1 & 2 & 2 & \cdots & 2\\
1 & x_{11} & x_{12} & \cdots & x_{1r} \\
\vdots & \vdots & \vdots & \cdots & \vdots \\
1 & x_{r 1} & x_{r2}& \cdots & x_{rr} \\
\end{bmatrix}, \nonumber
\end{equation}
for some entries $x_{jk}$'s. So 
\begin{align*}
D^{-1} \cdot P\cdot \tau(S)\cdot P^{-1} \cdot D &=\dfrac{\sqrt{\varepsilon p}}{\varepsilon p} \begin{bmatrix} 1 & 2 \lambda_1 \lambda_2^{-1} & 2\lambda_1 \lambda_3^{-1} & \cdots & 2\lambda_1 \lambda_{r}^{-1}\\
\lambda_2\lambda_1^{-1} & * & * & \cdots & * \\
\vdots & \vdots & \vdots & \cdots & \vdots \\
\lambda_{r}\lambda_1^{-1} & * & * & \cdots & * \\
\end{bmatrix}  \\
&= \dfrac{\sqrt{\varepsilon p}}{\varepsilon p}\begin{bmatrix}
1 & 2 & 2 & \cdots & 2\\
1 & s_{11} & s_{12} & \cdots & s_{1r} \\
\vdots & \vdots & \vdots & \cdots & \vdots \\
1 & s_{r1} & s_{r2}& \cdots & s_{rr} \\
\end{bmatrix} = S.
\end{align*}
Equating the first column and row of these matrices we conclude that $\lambda_j = \lambda_k = \lambda$ are all equal, and thus $M = \lambda P^{-1}$. 
\end{proof}

 Let now $S' := V_{Q}^{-1} \cdot S \cdot V_{Q}$. Theorem \ref{thm2} enables us to prove the following theorem. 

\begin{theorem} \label{thm3}  $S'$ has matrix entries in $ \Z[\frac{1}{2}(1+{\sqrt{\varepsilon p}})]$, the ring of integers of $\Q(\sqrt{\varepsilon p})$.
\end{theorem}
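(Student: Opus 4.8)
The plan is to deduce the statement quickly from Theorem \ref{thm2} together with Wang's Theorem \ref{thm:WangsThm}. There are two ingredients. First, I would show that the generator $\tau=\gamma^2$ of the index-$2$ subgroup $H\subseteq\Gal(\Q(\zeta_p)/\Q)$ from \eqref{eqn:GaloisDiagram} fixes $S'$ entrywise, so that the entries of $S'$ lie in the fixed field $\Q(\sqrt{\varepsilon p})$. Second, I would invoke Wang's Theorem \ref{thm:WangsThm}, which already guarantees that $S'=V_Q^{-1}SV_Q$ has entries in $\Z[\zeta_p]$. Since $\Z[\zeta_p]$ is the ring of integers of $\Q(\zeta_p)$, its intersection with the subfield $\Q(\sqrt{\varepsilon p})$ is precisely the ring of integers $\Z[\tfrac12(1+\sqrt{\varepsilon p})]$ of that subfield, and combining the two ingredients finishes the proof.

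For the first step I would compute $\tau(S')$ directly. Applying $\tau$ entrywise is a ring homomorphism on matrices, so it commutes with products and inverses, giving $\tau(S')=\tau(V_Q)^{-1}\,\tau(S)\,\tau(V_Q)$. By \eqref{eqn:P-perm} we have $\tau(V_Q)=PV_Q$, hence $\tau(V_Q)^{-1}=V_Q^{-1}P^{-1}$. For the middle factor, recall that in the model of Theorem \ref{thm2} one has $\xi_i(\mathfrak{s})=S$, so $\tau(S)=\xi_i^\tau(\mathfrak{s})=P\,\xi_i(\mathfrak{s})\,P^{-1}=PSP^{-1}$ by Theorem \ref{thm2}; here one uses that $P$ is a permutation matrix, hence has rational entries and is fixed by $\tau$. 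Substituting and cancelling the $P^{\pm1}$ factors yields $\tau(S')=V_Q^{-1}P^{-1}\cdot PSP^{-1}\cdot PV_Q=V_Q^{-1}SV_Q=S'$, so $S'$ is $H$-invariant, i.e. its entries lie in $\Q(\sqrt{\varepsilon p})$.

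I do not expect a serious obstacle here: all the genuine work is already contained in the compatibility $\xi_i^\tau(g)=P\xi_i(g)P^{-1}$ of Theorem \ref{thm2} (and in Wang's integrality statement), after which the present theorem is a two-line consequence. The one point worth flagging is that Galois-invariance by itself is not enough — it only places $S'$ in the field $\Q(\sqrt{\varepsilon p})$, not in its ring of integers — so one genuinely needs \emph{both} the $H$-invariance computed above \emph{and} the $\Z[\zeta_p]$-integrality from Theorem \ref{thm:WangsThm}, at which stage the elementary algebraic-number-theory fact that the ring of integers of $\Q(\zeta_p)$ meets $\Q(\sqrt{\varepsilon p})$ in the ring of integers $\Z[\tfrac12(1+\sqrt{\varepsilon p})]$ completes the argument. (One could instead mimic the symmetric-function argument used in the proof of Theorem \ref{thm1}, but routing through Wang's theorem is cleaner and reuses work already done.)
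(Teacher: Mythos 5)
Your proposal is correct and is essentially the paper's own argument: the paper likewise computes $\tau(S')=\tau(V_Q)^{-1}\tau(S)\tau(V_Q)=V_Q^{-1}P^{-1}\cdot PSP^{-1}\cdot PV_Q=S'$ using Theorem \ref{thm2} and \eqref{eqn:P-perm}, and then combines this Galois-invariance with the $\Z[\zeta_p]$-integrality from Theorem \ref{thm:WangsThm} to land in the ring of integers of $\Q(\sqrt{\varepsilon p})$. Your explicit remarks that $P$ is $\tau$-fixed and that invariance alone only gives the field, not the ring, are correct and merely make explicit what the paper leaves implicit.
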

\begin{proof}
Again let $\tau$ be the generator of the subgroup $H \subseteq \Gal(\Q(\zeta_p)/\Q)$ defined in \eqref{eqn:GaloisDiagram}. We want to show that $\tau(S') = S'$. By Theorem \ref{thm3}, we have: 

\begin{equation}
\begin{split}
\tau(S')&=\tau(V_{Q}^{-1} \cdot S \cdot V_{Q} ) \\
&= \tau(V_{Q}^{-1})\cdot \tau(S) \cdot \tau(V_{Q})\\
&=V_{Q}^{-1}\cdot P^{-1}\cdot P \cdot S \cdot P^{-1} \cdot P \cdot  V_{Q}\\
&=V_{Q}^{-1}\cdot S \cdot V_{Q}\\
&=S'.
\end{split} \nonumber
\end{equation}
Since $\tau$ fixes $S'$, the entries of $S'$ lie in the quadratic extension $\Q(\sqrt{\varepsilon p})$. But we also know from Thm. \ref{thm:WangsThm} that the entries of $S'$ belong to the ring of algebraic integers $\Z[\zeta_p]$, therefore they must lie in the ring of integers of $\Q(\sqrt{\varepsilon p})$.  
\end{proof}

By Theorems \ref{thm1} and \ref{thm3}, setting   
$$
\xi_i(\mathfrak{s}) = S', \xi_i(\mathfrak{t}) = T'
$$
gives integral models for the principal series Weil characters over the ring of integers of $\Q(\sqrt{\varepsilon p})$, therefore giving minimal integral models for these characters. 

\begin{remark}
As pointed out in Remark \ref{remark:cuspidalreps}, there are integral models available for the cuspidal Weil characters as well. It does not appear however that these models are defined over smaller rings. In particular, minimal integral models for cuspidal Weil characters remain unknown. 
\end{remark}

\begin{remark}
The referee pointed out to the authors that Theorem \ref{thm2} can be obtained as a special case of the Galois symmetry for modular tensor categories (MTCs). More precisely, according to \cite{Dong-Lin-Ng}, Theorem II, if $\rho:\SL_2(\ZZ) \rightarrow \GL_m(\mathbb{C})$ is the standard model for the modular representation of a MTC, then for any $\sigma \in \mathrm{Gal}(\Q(\zeta_p)/\Q)$ the action of $\sigma^2$ on the matrix entries of $\rho$ is given by conjugation by a signed permutation matrix. Although the principal series Weil characters $\xi_1, \xi_2$ cannot be obtained directly as modular representations (see for example \cite{reconstruction}, Prop. 3.22), the full Weil character $\xi_i\oplus \pi_i$ is the modular representation of the pointed MTC whose fusion algebra is $\mathbb{C}[\ZZ/p\ZZ]$ (with the appropriate choice of quadratic form). Theorem \ref{thm2} can then be obtained with some work from the above Galois symmetry result for a general MTC. 
\end{remark}

\section{Examples}
\label{sec:examples}

We now show the computations giving the explicit minimal integral models for $\xi_1$ and $\xi_2$ for the primes $p=7$ and $p=13$. We used SageMath \cite{sagemath} and MATLAB \cite{MATLAB:2015} to make and verify the computations.

\begin{example}
Let $p=7$. For the equivalent representation of $\xi_1$, let $c=1$. So,  $Q_1(x)=x^2/7$,  $B_1(x,y)=2xy/7$ and the Gauss sum is given by 
$$ \sum_{x \in \Z/7\Z} \zeta_7^{x^2} = 2 \zeta_{7}^{4} + 2 \zeta_{7}^{2} + 2 \zeta_{7} + 1 = \sqrt{-7}.$$

Then we have 
$$ S = \dfrac{-\sqrt{7}}{7} \begin{bmatrix}
1 & 2 & 2 & 2 \\
1 & \zeta_{7}^{5} + \zeta_{7}^{2} & \zeta_{7}^{4} + \zeta_{7}^{3} & \zeta_{7}^{6} + \zeta_{7} \\
1 & \zeta_{7}^{4} + \zeta_{7}^{3} & \zeta_{7}^{6} + \zeta_{7}  & \zeta_{7}^{5} + \zeta_{7}^{2} \\
1 & \zeta_{7}^{6} + \zeta_{7}  & \zeta_{7}^{5} + \zeta_{7}^{2} & \zeta_{7}^{4} + \zeta_{7}^{3}
\end{bmatrix},$$
$$T = \begin{bmatrix}
1 & 0 & 0 & 0 \\
0 & \zeta_{7} & 0 & 0 \\
0 & 0 & \zeta_{7}^{4} & 0 \\
0 & 0 & 0 & \zeta_{7}^{2}
\end{bmatrix},$$
$$V_Q=\begin{bmatrix}
1 & 1 & 1 & 1 \\
1 & \zeta_{7} & \zeta_{7}^{2} & \zeta_{7}^{3} \\
1 & \zeta_{7}^{4} & \zeta_{7} & \zeta_{7}^{5} \\
1 & \zeta_{7}^{2} & \zeta_{7}^{4} & \zeta_{7}
\end{bmatrix},$$
$$S'=V_Q^{-1} S V_Q = \begin{bmatrix}
-1 & \frac{1}{2}(1-\sqrt{-7}) & 0 & 0 \\
\frac{1}{2}(-1-\sqrt{-7}) & 1 & 0 & 0 \\
\frac{1}{2}(1-\sqrt{-7})& \frac{1}{2}(1+\sqrt{-7}) & 0 & -1 \\
1 & -1 & 1 & 0
\end{bmatrix},$$
and
$$T'=V_Q^{-1} T V_Q = \begin{bmatrix}
0 & 0 & 0 & -1 \\
1 & 0 & 0 & \frac{1}{2}(1-\sqrt{-7}) \\
0 & 1 & 0 & 1 \\
0 & 0 & 1 & \frac{1}{2}(1+\sqrt{-7})
\end{bmatrix}.$$

For the equivalent representation of $\xi_2$, we choose $c=3$.  So, $Q_2(x)=3x^2/7$,  $B_2(x,y)=6xy/7$ and the Gauss sum is given by 
$$ \sum_{x \in \Z/7\Z} \zeta_7^{3x^2} = -2 \zeta_{7}^{4} - 2 \zeta_{7}^{2} - 2 \zeta_{7} - 1 = -\sqrt{-7}.$$
Then we have 
$$ S = \dfrac{\sqrt{-7}}{7} \begin{bmatrix}
1 & 2 & 2 & 2 \\
1 & \zeta_{7}^{6} + \zeta_{7} & \zeta_{7}^{5} + \zeta_{7}^{2} & \zeta_{7}^{4} + \zeta_{7}^{3} \\
1 & \zeta_{7}^{5} + \zeta_{7}^{2} & \zeta_{7}^{4} + \zeta_{7}^{3} & \zeta_{7}^{6} + \zeta_{7} \\
1 & \zeta_{7}^{4} + \zeta_{7}^{3} & \zeta_{7}^{6} + \zeta_{7}^{1} & \zeta_{7}^{5} + \zeta_{7}^{2}
\end{bmatrix},$$
$$T = \begin{bmatrix}
1 & 0 & 0 & 0 \\
0 & \zeta_{7}^{3} & 0 & 0 \\
0 & 0 & \zeta_{7}^{5} & 0 \\
0 & 0 & 0 & \zeta_{7}^{6}
\end{bmatrix},$$
$$V_Q=\begin{bmatrix}
1 & 1 & 1 & 1 \\
1 & \zeta_{7}^{3} & \zeta_{7}^{6} & \zeta_{7}^{2} \\
1 & \zeta_{7}^{5} & \zeta_{7}^{3} & \zeta_{7} \\
1 & \zeta_{7}^{6} & \zeta_{7}^{5} & \zeta_{7}^{4}
\end{bmatrix},$$
$$S'=V_Q^{-1} S V_Q = \begin{bmatrix}
-1 & \frac{1}{2}(1+\sqrt{-7}) & 0 & 0 \\
\frac{1}{2}(-1+\sqrt{-7}) & 1 & 0 & 0 \\
\frac{1}{2}(1+\sqrt{-7}) & \frac{1}{2}(1-\sqrt{-7}) & 0 & -1 \\
1 & -1 & 1 & 0
\end{bmatrix},$$
and
$$T'=V_Q^{-1} T V_Q = \begin{bmatrix}
0 & 0 & 0 & -1 \\
1 & 0 & 0 & \frac{1}{2}(1+\sqrt{7})\\
0 & 1 & 0 & 1 \\
0 & 0 & 1 & \frac{1}{2}(1-\sqrt{7})
\end{bmatrix}.$$
\end{example}

\begin{example} Let $p=13$. For the equivalent representation of $\xi_1$, let $c=1$. So,  $Q_1(x)=x^2/13$,  $B_1(x,y)=2xy/13$ and the Gauss sum is given by 
$$ \sum_{x \in \Z/13\Z} \zeta_{13}^{x^2} = -2 \zeta_{13}^{11} - 2 \zeta_{13}^{8} - 2 \zeta_{13}^{7} - 2 \zeta_{13}^{6} - 2 \zeta_{13}^{5} - 2 \zeta_{13}^{2} - 1=\sqrt{13}.$$

Then we have 
$$ S = \dfrac{\sqrt{13}}{13} \begin{bmatrix}
1 & 2 & 2 & 2 & 2 & 2 & 2 \\
1 & \zeta_{13}^{11} + \zeta_{13}^{2} & \zeta_{13}^{9} + \zeta_{13}^{4} & \zeta_{13}^{7} + \zeta_{13}^{6} & \zeta_{13}^{8} + \zeta_{13}^{5} & \zeta_{13}^{10} + \zeta_{13}^{3} & \zeta_{13}^{12} + \zeta_{13} \\
1 & \zeta_{13}^{9} + \zeta_{13}^{4} & \zeta_{13}^{8} + \zeta_{13}^{5} & \zeta_{13}^{12} + \zeta_{13} & \zeta_{13}^{10} + \zeta_{13}^{3} & \zeta_{13}^{7} + \zeta_{13}^{6} & \zeta_{13}^{11} + \zeta_{13}^{2} \\
1 & \zeta_{13}^{7} + \zeta_{13}^{6} & \zeta_{13}^{12} + \zeta_{13} & \zeta_{13}^{8} + \zeta_{13}^{5} & \zeta_{13}^{11} + \zeta_{13}^{2} & \zeta_{13}^{9} + \zeta_{13}^{4} & \zeta_{13}^{10} + \zeta_{13}^{3} \\
1 & \zeta_{13}^{8} + \zeta_{13}^{5} & \zeta_{13}^{10} + \zeta_{13}^{3} & \zeta_{13}^{11} + \zeta_{13}^{2} & \zeta_{13}^{7} + \zeta_{13}^{6} &\zeta_{13}^{12} + \zeta_{13} & \zeta_{13}^{9} + \zeta_{13}^{4} \\
1 & \zeta_{13}^{10} + \zeta_{13}^{3} & \zeta_{13}^{7} + \zeta_{13}^{6} & \zeta_{13}^{9} + \zeta_{13}^{4} & \zeta_{13}^{12} + \zeta_{13} & \zeta_{13}^{11} + \zeta_{13}^{2} & \zeta_{13}^{8} + \zeta_{13}^{5} \\
1 & \zeta_{13}^{12} + \zeta_{13} & \zeta_{13}^{11} + \zeta_{13}^{2} & \zeta_{13}^{10} + \zeta_{13}^{3} & \zeta_{13}^{9} + \zeta_{13}^{4} & \zeta_{13}^{8} + \zeta_{13}^{5} & \zeta_{13}^{7} + \zeta_{13}^{6}
\end{bmatrix},$$
$$T = \begin{bmatrix}
1 & 0 & 0 & 0 & 0 & 0 & 0 \\
0 & \zeta_{13} & 0 & 0 & 0 & 0 & 0 \\
0 & 0 & \zeta_{13}^{4} & 0 & 0 & 0 & 0 \\
0 & 0 & 0 & \zeta_{13}^{9} & 0 & 0 & 0 \\
0 & 0 & 0 & 0 & \zeta_{13}^{3} & 0 & 0 \\
0 & 0 & 0 & 0 & 0 & \zeta_{13}^{12} & 0 \\
0 & 0 & 0 & 0 & 0 & 0 & \zeta_{13}^{10}
\end{bmatrix},$$
$$V_Q=\begin{bmatrix}
1 & 1 & 1 & 1 & 1 & 1 & 1 \\
1 & \zeta_{13} & \zeta_{13}^{2} & \zeta_{13}^{3} & \zeta_{13}^{4} & \zeta_{13}^{5} & \zeta_{13}^{6} \\
1 & \zeta_{13}^{4} & \zeta_{13}^{8} & \zeta_{13}^{12} & \zeta_{13}^{3} & \zeta_{13}^{7} & \zeta_{13}^{11} \\
1 & \zeta_{13}^{9} & \zeta_{13}^{5} & \zeta_{13} & \zeta_{13}^{10} & \zeta_{13}^{6} & \zeta_{13}^{2} \\
1 & \zeta_{13}^{3} & \zeta_{13}^{6} & \zeta_{13}^{9} & \zeta_{13}^{12} & \zeta_{13}^{2} & \zeta_{13}^{5} \\
1 & \zeta_{13}^{12} & \zeta_{13}^{11} & \zeta_{13}^{10} & \zeta_{13}^{9} & \zeta_{13}^{8} & \zeta_{13}^{7} \\
1 & \zeta_{13}^{10} & \zeta_{13}^{7} & \zeta_{13}^{4} & \zeta_{13} & \zeta_{13}^{11} & \zeta_{13}^{8}
\end{bmatrix},$$
$$S'=V_Q^{-1} S V_Q = \begin{bmatrix}
\frac{1}{2}(3+\sqrt{13}) & \frac{1}{2}(1+\sqrt{13}) & 0 & 0 & 0 & 0 & 0 \\
-\frac{1}{2}(5+\sqrt{13}) & -\frac{1}{2}(3+\sqrt{13}) & 0 & 0 & 0 & 0 & 0 \\
3+\sqrt{13} & \frac{1}{2}(5+\sqrt{13})& 0 & 0 & 0 & 0 & -1 \\
-4-\sqrt{13} & -\frac{1}{2}(5+\sqrt{13}) & 0 & 0 & 1 & 0 & 0 \\
3+\sqrt{13} & \frac{1}{2}(3+\sqrt{13}) & 0 & 1 & 0 & 0 & 0 \\
-\frac{1}{2}(5+\sqrt{13}) & -\frac{1}{2}(1+\sqrt{13}) & 0 & 0 & 0 & -1 & 0 \\
\frac{1}{2}(3+\sqrt{13}) & 1 & -1 & 0 & 0 & 0 & 0
\end{bmatrix},$$
and
$$T'=V_Q^{-1} T V_Q = \begin{bmatrix}
0 & 0 & 0 & 0 & 0 & 0 & 1 \\
1 & 0 & 0 & 0 & 0 & 0 &-\frac{1}{2}(1+\sqrt{13}) \\
0 & 1 & 0 & 0 & 0 & 0 & \frac{1}{2}(3 +\sqrt{13}) \\
0 & 0 & 1 & 0 & 0 & 0 & -\frac{1}{2}(5+\sqrt{13}) \\
0 & 0 & 0 & 1 & 0 & 0 & \frac{1}{2}(5 +\sqrt{13}) \\
0 & 0 & 0 & 0 & 1 & 0 & -\frac{1}{2}(3 +\sqrt{13}) \\
0 & 0 & 0 & 0 & 0 & 1 & \frac{1}{2}(1+\sqrt{13})
\end{bmatrix}.$$

For the equivalent representation of $\xi_2$, let $c=2$. So,  $Q_2(x)=2x^2/13$,  $B_2(x,y)=4xy/13$ and the Gauss sum is given by 
$$ \sum_{x \in \Z/13\Z} \zeta_{13}^{x^2} = 2 \zeta_{13}^{11} + 2 \zeta_{13}^{8} + 2 \zeta_{13}^{7} + 2 \zeta_{13}^{6} + 2 \zeta_{13}^{5} + 2 \zeta_{13}^{2} + 1=-\sqrt{13}.$$

Then we have 
$$ S = \dfrac{-\sqrt{13}}{13} \begin{bmatrix}
1 & 2 & 2 & 2 & 2 & 2 & 2 \\
1 & \zeta_{13}^{9} + \zeta_{13}^{4} & \zeta_{13}^{8} + \zeta_{13}^{5} & \zeta_{13}^{12}+\zeta_{13} & \zeta_{13}^{10} + \zeta_{13}^{3} & \zeta_{13}^{7} + \zeta_{13}^{6} & \zeta_{13}^{11} + \zeta_{13}^{2} \\
1 & \zeta_{13}^{8} + \zeta_{13}^{5} & \zeta_{13}^{10} + \zeta_{13}^{3} & \zeta_{13}^{11} + \zeta_{13}^{2} & \zeta_{13}^{7} + \zeta_{13}^{6} & \zeta_{13}^{12}+\zeta_{13} & \zeta_{13}^{9} + \zeta_{13}^{4} \\
1 & \zeta_{13}^{12}+\zeta_{13} & \zeta_{13}^{11} + \zeta_{13}^{2} & \zeta_{13}^{10} + \zeta_{13}^{3} & \zeta_{13}^{9} + \zeta_{13}^{4} & \zeta_{13}^{8} + \zeta_{13}^{5} & \zeta_{13}^{7} + \zeta_{13}^{6} \\
1 & \zeta_{13}^{10} + \zeta_{13}^{3} & \zeta_{13}^{7} + \zeta_{13}^{6} & \zeta_{13}^{9} + \zeta_{13}^{4} & \zeta_{13}^{12}+\zeta_{13} & \zeta_{13}^{11} + \zeta_{13}^{2} & \zeta_{13}^{8} + \zeta_{13}^{5} \\
1 & \zeta_{13}^{7} + \zeta_{13}^{6} & \zeta_{13}^{12}+\zeta_{13} & \zeta_{13}^{8} + \zeta_{13}^{5} & \zeta_{13}^{11} + \zeta_{13}^{2} & \zeta_{13}^{9} + \zeta_{13}^{4} & \zeta_{13}^{10} + \zeta_{13}^{3} \\
1 & \zeta_{13}^{11} + \zeta_{13}^{2} & \zeta_{13}^{9} + \zeta_{13}^{4} & \zeta_{13}^{7} + \zeta_{13}^{6} & \zeta_{13}^{8} + \zeta_{13}^{5} & \zeta_{13}^{10} + \zeta_{13}^{3} & \zeta_{13}^{12}+\zeta_{13}
\end{bmatrix},$$
$$T = \begin{bmatrix}
1 & 0 & 0 & 0 & 0 & 0 & 0 \\
0 & \zeta_{13}^{2} & 0 & 0 & 0 & 0 & 0 \\
0 & 0 & \zeta_{13}^{8} & 0 & 0 & 0 & 0 \\
0 & 0 & 0 & \zeta_{13}^{5} & 0 & 0 & 0 \\
0 & 0 & 0 & 0 & \zeta_{13}^{6} & 0 & 0 \\
0 & 0 & 0 & 0 & 0 & \zeta_{13}^{11} & 0 \\
0 & 0 & 0 & 0 & 0 & 0 & \zeta_{13}^{7}
\end{bmatrix},$$
$$V_Q=\begin{bmatrix}
1 & 1 & 1 & 1 & 1 & 1 & 1 \\
1 & \zeta_{13}^{2} & \zeta_{13}^{4} & \zeta_{13}^{6} & \zeta_{13}^{8} & \zeta_{13}^{10} & \zeta_{13}^{12} \\
1 & \zeta_{13}^{8} & \zeta_{13}^{3} & \zeta_{13}^{11} & \zeta_{13}^{6} & \zeta_{13} & \zeta_{13}^{9} \\
1 & \zeta_{13}^{5} & \zeta_{13}^{10} & \zeta_{13}^{2} & \zeta_{13}^{7} &\zeta_{13}^{12}  & \zeta_{13}^{4} \\
1 & \zeta_{13}^{6} & \zeta_{13}^{12}  & \zeta_{13}^{5} & \zeta_{13}^{11} & \zeta_{13}^{4} & \zeta_{13}^{10} \\
1 & \zeta_{13}^{11} & \zeta_{13}^{9} & \zeta_{13}^{7} & \zeta_{13}^{5} & \zeta_{13}^{3} & \zeta_{13} \\
1 & \zeta_{13}^{7} & \zeta_{13} & \zeta_{13}^{8} & \zeta_{13}^{2} & \zeta_{13}^{9} & \zeta_{13}^{3}
\end{bmatrix},$$
$$S'=V_Q^{-1} S V_Q = \begin{bmatrix}
\frac{1}{2}(3-\sqrt{13}) & \frac{1}{2}(1-\sqrt{13}) & 0 & 0 & 0 & 0 & 0 \\
\frac{1}{2}(-5+\sqrt{13}) & \frac{1}{2}(-3+\sqrt{13})& 0 & 0 & 0 & 0 & 0 \\
3-\sqrt{13} & \frac{1}{2}(5-\sqrt{13}) & 0 & 0 & 0 & 0 & -1 \\
-4+\sqrt{13}) & \frac{1}{2}(-5+\sqrt{13}) & 0 & 0 & 1 & 0 & 0 \\
3-\sqrt{13} &\frac{1}{2}(3-\sqrt{13}) & 0 & 1 & 0 & 0 & 0 \\
\frac{1}{2}(-5+\sqrt{13}) & \frac{1}{2}(-1+\sqrt{13}) & 0 & 0 & 0 & -1 & 0 \\
\frac{1}{2}(3-\sqrt{13}) & 1 & -1 & 0 & 0 & 0 & 0
\end{bmatrix},$$
and
$$T'=V_Q^{-1} T V_Q = \begin{bmatrix}
0 & 0 & 0 & 0 & 0 & 0 & 1 \\
1 & 0 & 0 & 0 & 0 & 0 & \frac{1}{2}(-1+\sqrt{13}) \\
0 & 1 & 0 & 0 & 0 & 0 &\frac{1}{2}(3-\sqrt{13}) \\
0 & 0 & 1 & 0 & 0 & 0 & \frac{1}{2}(-5+\sqrt{13}) \\
0 & 0 & 0 & 1 & 0 & 0 & \frac{1}{2}(5-\sqrt{13}) \\
0 & 0 & 0 & 0 & 1 & 0 & \frac{1}{2}(-3+\sqrt{13})\\
0 & 0 & 0 & 0 & 0 & 1 & \frac{1}{2}(1-\sqrt{13})
\end{bmatrix}.$$

\end{example}


\bibliographystyle{amsplain}
\bibliography{IntegralWeilCharacters}

\end{document}